\def\cs{{\cal S}}
\def\ZZ{\mathbb{Z}}
\def\CC{\mathbb{C}}
\def\wt{\widetilde}
\newtheorem{thm}{Theorem}[section]
\newtheorem{corollary}[thm]{Corollary}
\newtheorem{example}[thm]{Example}
\newtheorem{conjecture}[thm]{Conjecture}
\newtheorem{theorem}[thm]{Theorem}
\renewcommand{\Re}{\mathbb{R}}
\title{Markov processes with product-form stationary distribution}
\author{
{\bf Krzysztof Burdzy\footnote{Partially supported by NSF Grant DMS-0600206.}
} \ and  \ {\bf David White} }
\date{\ }                
\begin{document}
\maketitle

\section{Introduction}

This research has been inspired by several papers on processes
with inert drift \cite{W, BW, BHP, BBCH}. The model involves a
``particle'' $X$ and an ``inert drift'' $L$, neither of which
is a Markov process by itself, but the vector process $(X,L)$
is Markov. It turns out that for some processes $(X,L)$, the
stationary measure has the product form; see \cite{BBCH}. The
first goal of this note is to give an explicit characterization
of all processes $(X,L)$ with a finite state space for $X$  and
a product form stationary distribution---see Theorem
\ref{thm:main}.

The second, more philosophical, goal of this paper is to
develop a simple tool that could help generate conjectures
about stationary distributions for processes with {\it
continuous} state space and inert drift. So far, the only paper
containing a rigorous result about the stationary distribution
for a process with continuous state space and inert drift,
\cite{BBCH}, was inspired by computer simulations. Examples
presented in Section \ref{sec:ex} lead to a variety of
conjectures that would be hard to arrive at using pure
intuition or computer simulations.

\section{The model}
Let $\cs = \{1, 2, \dots, N\}$ for some integer $N>1$ and let $d
\geq 1$ be an integer. We define a continuous time Markov process
$(X(t),L(t))$ on $\cs \times \Re^d$ as follows. We associate with
each state $j \in \cs$ a vector $v_j\in\Re^d, 1\leq j\leq N$.
Define $L_j(t)=\mu(\{s\in [0,t]: X(s)=j\})$, where $\mu$ is
Lebesgue measure, and let $L(t)=\sum_{j \in \cs} v_j L_j(t)$. To
make the ``reinforcement'' non-trivial, we assume that at least
one of $v_j$'s is not 0. Since $L$ will always belong to the
hyperplane spanned by $v_j$'s, we also assume that
$d=\dim(\mathop{\rm span}\{v_1, \ldots, v_N\})$.

We also select non-negative functions $a_{ij}(l)$ which define
the Poisson rates of jumps from state $i$ to $j$.  The rates
depend on $l = L(t)$. We assume that $a_{ij}$'s are
right-continuous with left limits. Formally speaking, the
process $(X,L)$ is defined by its generator $A$ as follows,
\[ Af(j,l)=v_j\cdot\nabla_l f(j,l)
+\sum_{i\neq j}a_{ji}(l)[f(i,l)-f(j,l)], \quad j=1,\dots, N,\ l
\in \Re^d, \] for $f: \{1,\dots,N\}\times\Re^d\rightarrow \Re$.

We assume that $(X,L)$ is irreducible in the sense of Harris,
i.e., for some open set $U\subset \Re^d$ and some $j_0\in \cs$,
for all $(x,l) \in \cs \times \Re^d$, we have for some $t>0$,
 $$P((X(t), L(t)) \in \{j_0\} \times U) >0.
 $$
We are interested only in processes satisfying (\ref{kb17})
below. Using that condition, it is easy to check Harris
irreducibility for each of our models by a direct argument. A
standard coupling argument shows that Harris irreducibility
implies uniqueness of the stationary probability distribution
(assuming existence of such).

The (formal) adjoint of $A$ is given by
 \begin{equation}\label{kb1}
 A^*g(j,l)=-v_j\cdot\nabla_l g(j,l) +\sum_{i\neq j}
 [a_{ij}(l)g(i,l)-a_{ji}(l)g(j,l)],\quad j=1,\dots, N,\ l \in
 \Re^d.
 \end{equation}
We are interested in invariant measures of product form so suppose
that $g(j,l)=p_j g(l)$, where $\sum_{j\in\cs} p_j=1$ and
$\int_{\Re^d} g(l) dl = 1$.  We may assume that $p_j>0$ for all
$j$; otherwise some points in $\cs$ are never visited. Under these
assumptions, (\ref{kb1}) becomes
 \begin{equation*}
 A^*g(j,l)= -p_j v_j\cdot\nabla g(l) +\sum_{i\neq j}
 [p_i a_{ij}(l)g(l)- p_j a_{ji}(l)g(l)],\quad j=1,\dots, N,\ l \in
 \Re^d.
 \end{equation*}

\begin{theorem}\label{thm:main}
Assume that for every $i$ and $j$, the function $l\to
a_{ij}(l)$ is continuous. A probability measure $p_j g(l)djdl$
is invariant for the process $(X,L)$ if and only if
 \begin{equation}\label{kb2}
 -p_j v_j\cdot\nabla g(l) +\sum_{i\neq j}
 [ p_i a_{ij}(l)g(l)- p_j a_{ji}(l)g(l)] = 0,
 \quad j=1,\dots, N,\ l \in \Re^d.
 \end{equation}
\end{theorem}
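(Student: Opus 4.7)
The plan is to apply the standard criterion that a probability measure $\nu$ on $\cs\times\Re^d$ is invariant for $(X,L)$ if and only if $\sum_{j}\int Af(j,l)\,\nu(j,dl)=0$ for every $f$ in a core of the generator. I would take the core to consist of functions $f$ such that each $f(j,\cdot)\in C_c^\infty(\Re^d)$.

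First I would substitute $\nu(j,dl)=p_j g(l)\,dl$ into this criterion and integrate by parts on the drift term $v_j\cdot\nabla_l f(j,l)$. Since $f(j,\cdot)$ has compact support there are no boundary contributions, and the derivative transfers onto $g$, yielding $-\int f(j,l)\,p_j v_j\cdot\nabla g(l)\,dl$ in each summand. Next I would reorganize the double sum $\sum_j\sum_{i\neq j}a_{ji}(l)(f(i,l)-f(j,l))\,p_j g(l)$ by swapping the dummy indices $i\leftrightarrow j$ in the $f(i,l)$ term; this collects everything on the factor $f(j,l)$ and produces $\sum_j f(j,l)\sum_{i\neq j}[p_i a_{ij}(l)-p_j a_{ji}(l)]\,g(l)$. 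The invariance criterion thus reduces to
\[
 \sum_{j=1}^N\int_{\Re^d} f(j,l)\Bigl\{-p_j v_j\cdot\nabla g(l)+\sum_{i\neq j}[p_i a_{ij}(l)-p_j a_{ji}(l)]g(l)\Bigr\}dl=0
\]
for every admissible $f$, which is precisely the pairing of (\ref{kb1}) with $f$ for the product ansatz.

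The ``if'' direction is now immediate: when (\ref{kb2}) holds, every integrand vanishes pointwise. For the ``only if'' direction, given that the displayed identity holds for all test functions, I would fix $j_0\in\cs$ and choose $f$ supported at $j=j_0$ so that only one summand survives. The resulting vanishing of $\int f(j_0,l)B_{j_0}(l)\,dl$ for every $f(j_0,\cdot)\in C_c^\infty(\Re^d)$, combined with continuity of $B_{j_0}$ in $l$ (which follows from the continuity hypothesis on $a_{ij}$ together with continuity of $g$), forces $B_{j_0}\equiv 0$ by the standard variational lemma, giving (\ref{kb2}).

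The main obstacle is regularity: the integration by parts and the pointwise interpretation of (\ref{kb2}) both presuppose enough smoothness of $g$ for $\nabla g$ to be meaningful. I would resolve this either by reading (\ref{kb2}) in the distributional sense from the outset and then recovering $C^1$ regularity for $g$ from the relation $v_j\cdot\nabla g=c_j(l)g$ (whose right-hand side is continuous in $l$), or by incorporating a priori $C^1$ smoothness of $g$ into the product-measure ansatz. Beyond this regularity point, the argument is a routine rewriting of $\int Af\,d\nu$ via integration by parts and relabeling of summation indices.
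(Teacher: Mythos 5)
Your algebraic reduction --- integrating the drift term by parts against $g$, swapping the dummy indices $i\leftrightarrow j$ in the jump term, and then testing against functions supported at a single state to invoke the fundamental lemma of the calculus of variations --- is exactly the computation that sits at the end of the paper's proof (equations \eqref{eq:s11-1}--\eqref{eq:s11-3} and the closing density argument). The gap is that you open by citing as a ``standard criterion'' the equivalence between invariance of $\nu$ and $\sum_j\int Af(j,l)\,\nu(j,dl)=0$ for $f$ ranging over a core consisting of compactly supported smooth functions. For this process that equivalence is not available off the shelf: the jump rates $a_{ij}(l)$ are only assumed continuous, hence possibly unbounded in $l$, the state space is non-compact, and nothing has been established about the domain of the generator or about $C_c^\infty$ being a core for it. Verifying that equivalence is precisely what occupies the bulk of the paper's proof: the short-time expansions \eqref{eq:sep9-1}--\eqref{eq:sep9-3} obtained after localizing $L$ using $|L(t)-L(u)|\le v_*|t-u|$, the forward equation \eqref{eq:s10-3}, the weak continuity in the initial condition \eqref{eq:s10-2} (which is where the continuity hypothesis on $a_{ij}$ actually enters), the resulting $C^1$ regularity and compact support of $u_t(j,l)=E_{j,l}f(X_t,L_t)$ so that the backward equation \eqref{eq:s10-7} holds with $u_t$ itself as test function, and the uniform bound \eqref{eq:s10-6} justifying the interchange of integrals. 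Without some substitute for this analysis, your first sentence is an assertion, not a proof step.

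Note also that the paper deliberately avoids needing the full core property: rather than showing $\int Af\,d\pi=0$ for all test $f$, it differentiates $t\mapsto E_\pi f(X_t,L_t)$ to get $\int (Au_s)\,d\pi=0$ for the semigroup images $u_s=P_sf$, and in the ``only if'' direction it recovers \eqref{kb2} by observing that the family $\{u_s\}$ is dense among $C^1$ compactly supported functions because $u_s\to f$ as $s\downarrow 0$. If you want to keep your cleaner-looking statement ``test directly against $f$,'' you must either prove the core property or adopt this semigroup detour. Your closing remarks on the regularity of $g$ (reading \eqref{kb2} distributionally and bootstrapping $C^1$ smoothness of $g$ from $v_j\cdot\nabla g=c_j(l)g$) are sound and, if anything, more careful than the paper, which takes the differentiability of $g$ for granted; but that is a side issue compared with the missing semigroup analysis.
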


\begin{proof} Recall that the state space $\cal S$ for $X$ is
finite. Hence $v_* := \sup_{j\in\cal S} |v_j| < \infty$. Fix
arbitrary $r,t_* \in (0,\infty)$. It follows that,
 \begin{equation*}
 \sup_{i,j \in {\cal S}, l \in B(0, r+ 2 t_* v_*)} a_{ij} (l) = a_* < \infty.
 \end{equation*}
Note that we always have $|L(t) - L(u)| \leq v_* |t-u|$. Hence,
if $|L(0)| \leq r + t_* v_*$ and $s,t>0$, $s+t \leq t_*$, then
$|L(s+t)| \leq r +2 t_* v_*$ and, therefore,
 \begin{equation*}
 \sup_{j \in {\cal S}, u \leq s+t} a_{X(u),j} (L(u)) \leq a_* < \infty.
 \end{equation*}
This implies that the probability of two or more jumps on the
interval $[s,s+t]$ is $o(t)$. Assume that $|l| \leq r + t_*
v_*$ and $t \leq t_*$. Then we have the following three
estimates. First,
 \begin{equation}\label{eq:sep9-1}
P( X(t) = j \mid X(0) = i, L(0) = l) = a_{ij}(l) t  + R^1_{i,j,l}(t),
 \end{equation}
where the remainder $R^1_{i,j,l}(t)$ satisfies $\sup_{i,j
\in{\cal S}, l \in B(0, t_* v_*)} |R^1_{i,j,l}(t)| \leq R^1(t)$
for some $R^1(t)$ such that $\lim_{t\to 0} R^1(t)/t=0$.

Let $a_{ii} (l) = - \sum_{j\ne i} a_{ij}(l)$. We have
\begin{equation}\label{eq:sep9-2}
P( X(t) = i, L(t) = l + t v_i \mid X(0) = i, L(0) = l) = 1+
a_{ii}(l) t +  R^2_{i,l}(t),
 \end{equation}
where the remainder $R^2_{i,l}(t)$ satisfies $\sup_{i \in{\cal
S}, l \in B(0, t_* v_*)} |R^2_{i,l}(t)| \leq R^2(t)$ for some
$R^2(t)$ such that $\lim_{t\to 0} R^2(t)/t=0$.

Finally,
\begin{equation}\label{eq:sep9-3}
P( X(t) = i, L(t) \ne l + t v_i \mid X(0) = i, L(0) = l) =  R^3_{i,l}(t),
 \end{equation}
where the remainder $R^3_{i,l}(t)$ satisfies $\sup_{i \in{\cal
S}, l \in B(0, t_* v_*)} |R^3_{i,l}(t)| \leq R^3(t)$ for some
$R^3(t)$ such that $\lim_{t\to 0} R^3(t)/t=0$.

Now consider any $C^1$ function $f(j,l)$ with support in ${\cal
S} \times B(0, r)$. Recall that $|L(t) - L(u)| \leq v_* |t-u|$.
Hence, $E_{i,l} f(X_{t }, L_{t }) = 0 $ for $t\leq t_*$ and
$|l| \geq r + v_* t_*$.

Suppose that $|l_0| \leq r + v_* t_*$, $t_1 \in (0, t_*)$ and
$s \in (0, t_*-t_1)$. Then
\begin{align*}
& E_{i,l_0}  f(X_{t_1 + s}, L_{t_1 + s}) - E_{i,l} f(X_{t_1 }, L_{t_1 }) \\
&= \sum_{j \in \cal S} \int_{\Re^d} \sum_{k \in \cal S} \int_{\Re^d}
f(k, r) P( X(t_1+s) = k , L(t_1+s) \in dr \mid X(t_1) = j, L(t_1) = l)\\
& \qquad \times P( X(t_1) = j , L(t_1) \in dl \mid X(0) = i, L(0) = l_0)\\
&- \sum_{j \in \cal S} \int_{\Re^d}
f(j, l) P( X(t_1) = j , L(t_1) \in dr \mid X(0) = i, L(0) = l_0).
\end{align*}
We combine this formula with
\eqref{eq:sep9-1}-\eqref{eq:sep9-3} to see that,
\begin{align*}
& E_{i,l_0}  f(X_{t_1 + s}, L_{t_1 + s}) - E_{i,l} f(X_{t_1 },
L_{t_1 })\\
 & = \sum_{j \in \cal S} \int_{\Re^d} \sum_{k \in {\cal
S}, k\ne j} (f(k, l) + O(s))
(a_{jk}(l) s  + R^1_{j,k,l}(s))\\
& \qquad \times P( X(t_1) = j , L(t_1) \in dl \mid X(0) = i, L(0) = l_0)\\
&+ \sum_{j \in \cal S} \int_{\Re^d}
f(j, l + s v_j)
(1+ a_{jj}(l) s +  R^2_{j,l}(s))\\
& \qquad \times P( X(t_1) = j , L(t_1) \in dl \mid X(0) = i, L(0) = l_0)\\
&+ \sum_{j \in \cal S} \int_{\Re^d}
(f(j, l) + O(s))  R^3_{j,l}(s)\\
& \qquad \times P( X(t_1) = j , L(t_1) \in dl \mid X(0) = i, L(0) = l_0)\\
&- \sum_{j \in \cal S} \int_{\Re^d}
f(j, l) P( X(t_1) = j , L(t_1) \in dl \mid X(0) = i, L(0) = l_0),\\
\intertext{which can be rewritten as}
 & \sum_{j \in \cal S}
\int_{\Re^d} (f(j, l + s v_j) -f(j, l))
P( X(t_1) = j , L(t_1) \in dl \mid X(0) = i, L(0) = l_0)\\
& + \sum_{j \in \cal S} \int_{\Re^d} \Bigg(
f(j, l + s v_j) a_{jj}(l) + \sum_{k \in {\cal S}, k\ne j}
f(k, l) a_{jk}(l)\Bigg) s \\
&\qquad \times P( X(t_1) = j , L(t_1) \in dl \mid X(0) = i, L(0) = l_0)\\
& + \sum_{j \in \cal S} \int_{\Re^d}
\Bigg(\Bigg(\sum_{k \in {\cal S}, k\ne j} f(k, l) R^1_{j,k,l}(s) + O(s)
(a_{jk}(l) s  + R^1_{j,k,l}(s))
\Bigg) \\
& \qquad + f(j, l + s v_j)  R^2_{j,l}(s) + (f(j,l)+O(s))  R^3_{j,l}(s)\Bigg)
P( X(t_1) = j , L(t_1) \in dl \mid X(0) = i, L(0) = l_0).
\end{align*}

We will analyze the limit
\begin{equation*}
\lim_{s\downarrow 0} \frac 1 s
( E_{i,l_0}  f(X_{t_1 + s}, L_{t_1 + s}) - E_{i,l_0} f(X_{t_1 }, L_{t_1 }) ).
\end{equation*}
Note that
\begin{align*}
& \lim_{s\downarrow 0} \frac 1 s
\sum_{j \in \cal S} \int_{\Re^d}
\Bigg(\Bigg(\sum_{k \in {\cal S}, k\ne j} f(k, l)
R^1_{j,k,l}(s) + O(s) (a_{jk}(l) s  + R^1_{j,k,l}(s))
\Bigg) \\
& \qquad + f(j, l + s v_j)  R^2_{j,l}(s) + (f(j,l)+O(s))
R^3_{j,l}(s)\Bigg) P( X(t_1) = j , L(t_1) \in dl \mid X(0) = i,
L(0) = l_0) \\
& \qquad= 0.
\end{align*}
We also have
\begin{align*}
& \lim_{s\downarrow 0} \frac 1 s
\sum_{j \in \cal S} \int_{\Re^d} (f(j, l + s v_j) -f(j, l)) P(
X(t_1) = j , L(t_1) \in dl \mid X(0) = i, L(0) =
l_0)\\
& = \sum_{j \in \cal S} \int_{\Re^d} \nabla_l f(j, l )\cdot v_j \,  P(
X(t_1) = j , L(t_1) \in dl \mid X(0) = i, L(0) =
l_0),
\end{align*}
and
\begin{align*}
& \lim_{s\downarrow 0} \frac 1 s
\sum_{j \in \cal S} \int_{\Re^d} \Bigg(
f(j, l + s v_j) a_{jj}(l) + \sum_{k \in {\cal S}, k\ne j}
f(k, l) a_{jk}(l)\Bigg)s \\
&\qquad \times P( X(t_1) = j , L(t_1) \in dl \mid X(0) = i, L(0) = l_0)\\
&=\sum_{j \in \cal S} \int_{\Re^d}
 \sum_{k \in {\cal S}}
f(k, l) a_{jk}(l)  P( X(t_1) = j , L(t_1) \in dl \mid X(0) = i, L(0) = l_0).
\end{align*}
This implies that
\begin{align}
& \frac d {dt} E_{i,l_0} f(X_{t }, L_{t }) \Big|_{t=t_1}
= \lim_{s\downarrow 0} \frac 1 s
( E_{i,l_0}  f(X_{t_1 + s}, L_{t_1 + s}) - E_{i,l_0} f(X_{t_1 }, L_{t_1 }) )
\nonumber\\
&=\sum_{j \in \cal S} \int_{\Re^d}
\left(\nabla_l f(j, l )\cdot v_j + \sum_{k \in {\cal S}}
f(k, l) a_{jk}(l) \right) P( X(t_1) = j , L(t_1) \in dl \mid X(0) = i, L(0) = l_0)
\label{eq:s10-1}\\
&=\sum_{j \in \cal S} \int_{\Re^d}
A f (j,l) P( X(t_1) = j , L(t_1) \in dl \mid X(0) =
i, L(0) = l_0)\nonumber \\
&= E_{i,l_0} Af (X_{t_1}, L_{t_1}),\label{eq:s10-3}
\end{align}
for all $i$ and $|l_0| \leq r + v_* t_*$.

We will argue that
\begin{align}\label{eq:s10-2}
P( (X(t_1) , L(t_1)) \in \,\cdot\, \mid X(0) = i, L(0) = l)
\to
P( (X(t_1) , L(t_1)) \in \,\cdot\, \mid
X(0) = i, L(0) = l_0),
\end{align}
weakly when $l \to l_0$. Let $T_k$ be the time of the $k$-th
jump of $X$. We have
\begin{align*}
P( T_1 > t \mid X(0) = i, L(0) = l)
= \exp\left( \int _0^t a_{ii}(l+s v_i)ds\right).
\end{align*}
Since $l \to a_{ii}(l)$ is continuous, we conclude that
\begin{align*}
P( T_1> t \mid X(0) = i, L(0) = l)
\to P( T_1> t \mid X(0) = i, L(0) = l_0),
\end{align*}
weakly as $l\to l_0$. This and continuity of $l \to a_{ij}(l)$
for every $j$ implies that
\begin{align*}
P( (X(T_1) , L(T_1)) \in \,\cdot\, \mid X(0) = i, L(0) = l)
\to
P( (X(T_1) , L(T_1)) \in \,\cdot\, \mid
X(0) = i, L(0) = l_0),
\end{align*}
weakly when $l \to l_0$. By the strong Markov property applied
at $T_k$'s, we obtain inductively that
\begin{align*}
P( (X(T_k) , L(T_k)) \in \,\cdot\, \mid X(0) = i, L(0) = l)
\to
P( (X(T_k) , L(T_k)) \in \,\cdot\, \mid
X(0) = i, L(0) = l_0),
\end{align*}
when $l \to l_0$, for every $k\geq 1$. This easily implies
\eqref{eq:s10-2}, because the number of jumps is stochastically
bounded on any finite interval.

Since $(j,l) \hookrightarrow \nabla_l f(j, l )\cdot v_j + \sum_{k
\in {\cal S}} f(k, l) a_{jk}(l)$ is a continuous function, it
follows from \eqref{eq:s10-1} and \eqref{eq:s10-2} that $l_0
\hookrightarrow \frac d {dt} E_{i,l_0} f(X_{t }, L_{t
})\Big|_{t=t_1}$ is continuous on the set $|l_0| \leq r + v_*
t_*$.

Recall that $E_{i,l_0} f(X_{t }, L_{t }) = 0 $ for $t\leq t_*$
and $|l_0| \geq r + v_* t_*$. Hence, $l_0\to \frac d {dt}
E_{i,l_0} f(X_{t }, L_{t })\Big|_{t=t_1}$ is continuous for all
$t_1\leq t_*$ and all values of $i$.

Fix some $t\leq t_*$ and let $u_t(j,l) = E_{j,l} f(X_{t }, L_{t
})$. We have just shown that for a fixed $t\leq t_*$ and any
$j$, the function $l\to u_t(j,l)$ is $C^1$. Hence we can apply
\eqref{eq:s10-3} with $f(j,l) = u_t(j,l)$ to obtain,
\begin{align}\label{eq:s10-7}
\frac d {dt} E_{j,l} f(X_{t }, L_{t })
&=\lim_{s \downarrow 0} \frac 1s
(E_{j,l} f(X_{t+s}, L_{t+s}) - E_{j,l} f(X_{t }, L_{t }))\\
&=\lim_{s \downarrow 0} \frac 1s (E_{j,l} u_t(X_s,L_s) - u_t(j,l))
\nonumber\\
&= (A u_t)(j,l).\nonumber
\end{align}

Since $\sup_{j,l}\left( \nabla_l f(j, l )\cdot v_j + \sum_{k
\in {\cal S}} f(k, l) a_{jk}(l)\right)<\infty$, formula
\eqref{eq:s10-1} shows that
\begin{align}\label{eq:s10-6}
\sup_{j,l,s\leq t_*}
\left(\frac d {dt} E_{j,l} f(X_{s }, L_{s }) \right) <\infty.
\end{align}

Now assume that (\ref{kb2}) is true and let $\pi(dj,dl) =p_j
g(l)djdl$. In view of \eqref{eq:s10-6}, we can change the order
of integration in the following calculation. For $0\leq t_1 <
t_2 \leq t_*$, using \eqref{eq:s10-7},
\begin{align}\label{eq:s11-1}
E_\pi f&(X(t_2), L(t_2)) - E_\pi f(X(t_1), L(t_1))\\
&= \sum_{j \in \cal S} \int_{\Re^d}
 E_{j,l} f(X_{t_2 }, L_{t_2 }) p_j g(l)dl
- \sum_{j \in \cal S} \int_{\Re^d}
 E_{j,l} f(X_{t_1 }, L_{t_1 }) p_j g(l)dl
 \nonumber\\
&= \sum_{j \in \cal S} \int_{\Re^d}
 \int_{t_1}^{t_2} \frac d{ds} E_{j,l} f(X_{s }, L_{s })ds p_j g(l)dl
 \nonumber\\
&= \int_{t_1}^{t_2}\sum_{j \in \cal S} \int_{\Re^d}
 \frac d{ds} E_{j,l} f(X_{s }, L_{s }) p_j g(l)dlds
 \nonumber\\
&= \int_{t_1}^{t_2}\sum_{j \in \cal S} \int_{\Re^d}
 (A u_s)(j,l) p_j g(l)dlds.\nonumber
\end{align}
Let $h(j,l) = p_j g(l)$. For a fixed $j$ and $s\leq t_*$, the
function $u_s(j,l)=0$ outside a compact set, so we can use
integration by parts to show that
\begin{align}\label{eq:s11-2}
\sum_{j \in \cal S} \int_{\Re^d}
 (A u_s)(j,l) p_j g(l)dl
= \sum_{j \in \cal S} \int_{\Re^d}
 u_s(j,l) (A^* h)(j,l)dl.
\end{align}
We combine this with the previous formula and the assumption
that $A^* h \equiv 0$ to see that
\begin{align*}
E_\pi f&(X(t_2), L(t_2)) - E_\pi f(X(t_1), L(t_1))
= \int_{t_1}^{t_2}\sum_{j \in \cal S} \int_{\Re^d}
 u_s(j,l) (A^* h)(j,l)dl ds = 0.
\end{align*}
It follows that $t\to E_\pi f(X(t), L(t))$ is constant for
every $C^1$ function $f(j,l)$ with compact support. This proves
that the distributions of $(X(t_1), L(t_1))$ and $(X(t_2),
L(t_2))$ are identical under $\pi$, for all $0\leq t_1 < t_2
\leq t_*$.

Conversely, assume that $\pi(dj,dl)=p_j g(l)djdl$ is invariant.
Then the left hand side of \eqref{eq:s11-1} is zero for all
$0\leq t_1 < t_2 \leq t_*$. This implies that
\begin{align*}
\sum_{j \in \cal S} \int_{\Re^d}
 (A u_s)(j,l) p_j g(l)dl =0
\end{align*}
for a set of $s$ that is dense on $[0,\infty)$. By
\eqref{eq:s11-2},
\begin{align}\label{eq:s11-3}
 \sum_{j \in \cal S} \int_{\Re^d}
 u_s(j,l) (A^* h)(j,l)dl =0
\end{align}
for a set of $s$ that is dense on $[0,\infty)$. Note that
$\lim_{s\downarrow 0} u_s(j,l) = f(j,l)$. Hence, the collection
of $C^1$ functions $u_s(j,l)$, obtained by taking arbitrary
$C^1$ functions $f(j,l)$ with compact support and positive
reals $s$ dense in $[0,\infty)$, is dense in the family of
$C^1$ functions with compact support. This and \eqref{eq:s11-3}
imply that $A^* h \equiv 0$, that is, \eqref{kb2} holds.
\end{proof}

\bigskip

\begin{corollary}
If a probability measure $p_j g(l)djdl$ is invariant for the
process $(X,L)$ then
\begin{equation}\label{kb17}
\sum_{j\in \cs} p_j v_j=0.
\end{equation}
\end{corollary}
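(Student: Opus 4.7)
The plan is to invoke Theorem \ref{thm:main} to turn the invariance hypothesis into the system \eqref{kb2} and then sum those $N$ equations over $j \in \cs$. First I would write \eqref{kb2} for each $j$. On summing, the double sum of jump terms
\[
\sum_{j \in \cs} \sum_{i \ne j} \bigl[p_i a_{ij}(l) g(l) - p_j a_{ji}(l) g(l)\bigr]
\]
cancels identically in $l$, because renaming $i \leftrightarrow j$ in the second summand turns it term-for-term into the first. What remains collapses to the single vector identity
\[
-\Bigl(\sum_{j \in \cs} p_j v_j\Bigr) \cdot \nabla g(l) = 0, \qquad l \in \Re^d.
\]

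Writing $w := \sum_{j \in \cs} p_j v_j$, the identity above says that $g$ is constant along the direction $w$, so $g(l + t w) = g(l)$ for all $l \in \Re^d$ and $t \in \Re$. The next step is to conclude $w = 0$ from the fact that $g$ is a probability density on $\Re^d$. I would argue by contradiction: if $w \neq 0$, I would pick orthonormal coordinates in which $w$ is a positive multiple of the first basis vector, so that $g$ depends only on the remaining coordinates $(l_2,\ldots,l_d)$; Fubini's theorem then forces $\int_{\Re^d} g\, dl$ to be either $0$ or $+\infty$, contradicting $\int g = 1$. Therefore $w = 0$, which is exactly \eqref{kb17}.

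I do not expect a genuine obstacle. The algebraic cancellation is a one-line index swap, and the integrability argument is standard. The only small point to record is that the regularity implicit in \eqref{kb2}—that $\nabla g$ exists and \eqref{kb2} holds pointwise—legitimates both summing over $j$ and interpreting $w \cdot \nabla g \equiv 0$ as translation invariance of $g$ along $w$.
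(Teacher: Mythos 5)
Your proposal is correct and follows essentially the same route as the paper: sum \eqref{kb2} over $j$, observe the antisymmetric cancellation of the jump terms to get $\bigl(\sum_j p_j v_j\bigr)\cdot\nabla g \equiv 0$, and then use integrability of $g$ to rule out a nonzero direction of constancy. The paper phrases the last step as choosing points whose gradients $\nabla g(l_k)$ span $\Re^d$, while you spell out the equivalent Fubini contradiction; both are the same standard argument.
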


\begin{proof}
Summing \eqref{kb2} over $j$, we obtain
\begin{equation}\label{eq:s11-5}
\sum_{j\in \cs} - p_j v_j \cdot \nabla g(l)=0,
\end{equation}
for all $l$. Since $g$ is integrable over $\Re^d$, it is
standard to show that there exist $l_1, l_2, \dots, l_d$ which
span $\Re^d$. Applying \eqref{eq:s11-5} to all $l_1,
l_2,\dots$, we obtain \eqref{kb17}.
\end{proof}

It will be convenient to use the following notation,
 \begin{equation}\label{kb6}
 b_{ij}(l)= p_i a_{ij}(l)- p_j a_{ji}(l).
 \end{equation}
Note that $b_{ij} = - b_{ji}$.
\bigskip

\begin{corollary}
A probability measure $p_j g(l)djdl$ is invariant for the
process $(X,L)$ and $g(l)$ is the Gaussian density
\begin{align}\label{eq:s13-1}
g(l)=(2\pi)^{-d/2}\exp(-|l|^2/2),
\end{align}
if and only if the following equivalent conditions hold,
 \begin{equation}\label{kb4}
 p_j v_j\cdot l +\sum_{i\neq j}
 [p_i a_{ij}(l)- p_j a_{ji}(l)] = 0,
 \quad j=1,\dots, N,\ l \in \Re^d,
 \end{equation}
 \begin{equation}\label{kb5}
  p_j v_j\cdot l +\sum_{i\neq j}
  b_{ij}(l) = 0,
 \quad j=1,\dots, N,\ l \in \Re^d.
 \end{equation}
\end{corollary}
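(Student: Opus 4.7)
The plan is to apply Theorem \ref{thm:main} directly. By that theorem, the product measure $p_j g(l)\,dj\,dl$ is invariant if and only if the system of PDEs \eqref{kb2} holds. So the task reduces to specializing \eqref{kb2} to the Gaussian $g$ in \eqref{eq:s13-1} and checking it is equivalent to \eqref{kb4}.

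First I would compute $\nabla g(l)$. For the standard Gaussian density we have $\nabla g(l) = -l\, g(l)$, so
\[
-p_j v_j \cdot \nabla g(l) = p_j v_j \cdot l \, g(l).
\]
Substituting this into \eqref{kb2} yields, for each $j = 1,\dots,N$ and $l \in \Re^d$,
\[
p_j (v_j \cdot l)\, g(l) + \sum_{i \ne j}\bigl[p_i a_{ij}(l) - p_j a_{ji}(l)\bigr] g(l) = 0.
\]
Since $g(l) > 0$ everywhere, we may divide through by $g(l)$ to obtain \eqref{kb4}. Conversely, multiplying \eqref{kb4} by $g(l)$ recovers \eqref{kb2} with the Gaussian $g$, so the two conditions are equivalent and Theorem \ref{thm:main} gives the result.

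Finally, the equivalence of \eqref{kb4} and \eqref{kb5} is purely notational: the bracketed quantity $p_i a_{ij}(l) - p_j a_{ji}(l)$ in \eqref{kb4} is exactly $b_{ij}(l)$ as defined in \eqref{kb6}, so replacing it gives \eqref{kb5}. There is no real obstacle in this argument; the entire proof is a substitution of the Gaussian into the characterization already established in Theorem \ref{thm:main}, together with the observation that $g > 0$ permits cancellation of the common factor.
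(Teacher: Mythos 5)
Your forward direction and the identification of \eqref{kb4} with \eqref{kb5} via \eqref{kb6} match the paper exactly, and your converse ("if $g$ is the Gaussian and \eqref{kb4} holds, multiply by $g$ to recover \eqref{kb2}, hence invariance by Theorem \ref{thm:main}") is the content of the paper's first sentence, namely that for Gaussian $g$ the conditions \eqref{kb2} and \eqref{kb4} are equivalent. However, the paper's proof contains a second converse that you omit: since the left-hand side of the corollary is the \emph{conjunction} ``the measure is invariant \emph{and} $g$ is the Gaussian density,'' a proof of the ``if'' direction must also produce the Gaussianity of $g$ as a conclusion rather than assume it. You assume $g$ is Gaussian and derive invariance; the paper complements this by assuming invariance (hence \eqref{kb2}, by the theorem) together with \eqref{kb4} and deriving Gaussianity: subtracting $g(l)$ times \eqref{kb4} from \eqref{kb2} gives $p_j v_j\cdot(\nabla g(l)+l\,g(l))=0$ for every $j$, and since $p_j>0$ and the $v_j$ span $\Re^d$ (a standing assumption of the model), this forces $\nabla g(l)=-l\,g(l)$, so $g$ must be of the form \eqref{eq:s13-1} after normalization. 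Strictly speaking the literal converse ``\eqref{kb4} $\Rightarrow$ invariant and Gaussian'' cannot hold for an arbitrary given $g$ (condition \eqref{kb4} does not mention $g$), so each argument supplies one admissible disambiguation; you should at least add the paper's rigidity step so that both conjuncts on the left are recovered under the natural reading in which the product measure $p_jg(l)\,dj\,dl$ is the given object.
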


\begin{proof}
If $g(l)$ is the Gaussian density then $\nabla g(l)=-l g(l)$
and \eqref{kb2} is equivalent to \eqref{kb4}. Conversely, if
\eqref{kb2} and \eqref{kb4} are satisfied then $\nabla g(l)=-l
g(l)$, so $g(l)$ must have the form \eqref{eq:s13-1}.
\end{proof}

In the rest of the paper we will consider only processes
satisfying \eqref{kb4}-\eqref{kb5}.

\begin{example}
{\rm

We now present some choices for $a_{ij}$'s. Recall the notation
$x^+= \max(x,0)$, $x^-=-\min(x,0)$, and the fact that $x^+ -
x^- =x$. Given $v_j$'s, $p_j$'s and $b_{ij}$'s which satisfy
(\ref{kb5}) and the condition $b_{ij} = - b_{ji}$, we may take
  \begin{equation}\label{kb8}
 a_{ij}(l) = \left(b_{ij}(l)\right)^+ /p_i.
  \end{equation}
 Then
 \[
 a_{ji}(l) = \left(b_{ji}(l)\right)^+ /p_j
 = \left(-b_{ij}(l)\right)^+ /p_j
 = \left(b_{ij}(l)\right)^- /p_j,
 \]
so
 \[
 p_i a_{ij}(l) - p_j a_{ji}(l) =
 \left(b_{ij}(l)\right)^+
 - \left(b_{ij}(l)\right)^-
 = b_{ij}(l),
 \]
as desired.

The above is a special case, in a sense, of the following.
Suppose that $p_j=p_i$ for all $i$ and $j$. Assume that $v_j$'s
and $b_{ij}$'s satisfy (\ref{kb5}) and the condition $b_{ij} =
- b_{ji}$. Fix some $c>0$ and let
 \begin{equation}\label{kb7}
 a_{ij}(l) = \frac { b_{ij}(l)
 \exp(c b_{ij}(l)) } {\exp (c b_{ij}(l)) - \exp (-c b_{ij}(l))}.
 \end{equation}
It is elementary to check that with this definition,
(\ref{kb6}) is satisfied for all $i$ and $j$, because
$b_{ij}(l) = - b_{ji}(l)$. The formula (\ref{kb7}) arose
naturally in \cite{W}. Note that (\ref{kb8}) (with all $p_i$'s
equal) is the limit of (\ref{kb7}) as $c\to \infty$.

 }
\end{example}

\section{Approximation of processes with continuous state space}
\label{sec:ex}

This section is contains examples of processes $(X,L)$ with
finite state space for $X$, and conjectures concerned with
processes with continuous state space. There are no proofs in
this section

First we will consider processes that resemble diffusions with
reflection. In these models, the ``inert drift'' is accumulated
only at the ``boundary'' of the domain.

We will now assume that elements of $\cs$ are points in a
Euclidean space $\Re^n$ with $n\leq N$. We denote them $\cs =\{
x_1, x_2, \dots, x_N\}$. In other words, by abuse of notation,
we switch from $j$ to $x_j$. We also take $v_j\in \Re^n$, i.e.
$d=n$. Moreover, we limit ourselves to functions $b_{ij}(l)$ of
the form $b_{ij} \cdot l$ for some vector $b_{ij} \in \Re^n$.
Then (\ref{kb5}) becomes
\begin{alignat}{5}
0       & +b_{12}  &&+ b_{13}  &&+ \cdots  &&+ b_{1N}
&&= -p_1 v_1 \notag\\
-b_{12}     & - 0
    &&+ b_{23} &&+ \cdots  &&+ b_{2n} &&= -p_2 v_2\label{eq:10}\\
& && &&\dots\notag\\
-b_{1N}     & - b_{2N}  &&-b_{3N}   &&- \cdots  &&-0
     &&= -p_N v_N\notag.
\end{alignat}

Consider any orthogonal transformation $\Lambda: \Re^n \to
\Re^n$.
If $\{b_{ij}, v_j, p_j\}$ satisfy \eqref{eq:10}
then so do $\{\Lambda b_{ij}, \Lambda v_j, p_j\}$.

Suppose that $a_{ij}(l)$ have the form $a_{ij} \cdot l$ for
some $a_{ij} \in \Re^n$.
If $\{a_{ij}, v_j, p_j\}$ satisfy \eqref{kb4} then so do $\{\Lambda
a_{ij}, \Lambda v_j, p_j\}$. Moreover, the process with parameters
$\{a_{ij}, v_j\}$ has the same transition probabilities as the
one with parameters $\{\Lambda a_{ij}, \Lambda v_j\}$.

\begin{example}
{\rm

Our first example is a reflected random walk on the interval
$[0,1]$. Let $x_j = (j-1)/(N-1)$ for $j=1, \dots, N$. We will
construct a process with all $p_j$'s equal to each other, i.e.,
$p_j = 1/N$. We will take $l \in \Re^1$, $v_1=\alpha$ and
$v_N=-\alpha$, for some $\alpha = \alpha(N) >0$, and all other
$v_j=0$, so that the ``inert drift'' $L$ changes only at the
endpoints of the interval. We also allow jumps only between
adjacent points, so $b_{ij} = 0$ for $|i-j|>1$. Then
(\ref{eq:10}) yields
\begin{align*}
b_{12} &= -\alpha /N \\
-b_{12}+b_{23} &=0 \\
 \cdots \\
-b_{(N-1)N} &= \alpha /N.
\end{align*}
Solving this, we obtain $b_{i(i+1)}=-\alpha /N$ for all $i$.

\newcommand{\lmr}{A_R}
\newcommand{\lml}{A_L}

We would like to find a family of semi-discrete models indexed
by $N$ that would converge to a continuous process with
product-form stationary distribution as $N\to \infty$. For
$1<i<N$, we set $a_{i(i+1)}(l)=\lmr(l,N)$ and
$a_{(i+1)i}(l)=\lml(l,N)$. We would like the random walk to
have variance of order 1 at time 1, for large $N$, so we need
\begin{equation}\label{kb11}
\lmr + \lml = N^2.
\end{equation}
Since $b_{i(i+1)}=-\alpha /N$ for all $i$, $\lmr$ and $\lml$
have to satisfy
\begin{equation}\label{kb10}
\lml - \lmr = \alpha l.
\end{equation}
When $l$ is of order 1, we would like to have drift of order 1
at time 1, so we take $\alpha =N$. Then (\ref{kb10}) becomes
\begin{equation}\label{kb12}
\lml - \lmr = N l.
\end{equation}
Solving (\ref{kb11}) and (\ref{kb12}) gives
\begin{align*}
\lml=\frac{N^2+ N l}{2}, && \lmr=\frac{N^2- N l}{2}.
\end{align*}

Unfortunately, $\lml$ and $\lmr$ given by the above formula can
take negative values---this is not allowed because $a_{ij}$'s
have to be positive. However, for every $N$, the stationary
distribution of $L$ is standard normal, so $l$ typically takes
values of order 1. We are interested in large $N$ so,
intuitively speaking, $\lmr$ and $\lml$ are not likely to take
negative values. To make this heuristics rigorous, we modify
the formulas for $\lmr$ and $\lml$ as follows,
\begin{align}\label{kb13}
\lml=\frac{N^2+ N l}{2} \lor 0 \lor N l, && \lmr=\frac{N^2- N
l}{2} \lor 0\lor (-N l).
\end{align}
Let $P_N$ denote the distribution of $(X,L)$ with the above
parameters. We conjecture that as $N\to\infty$, $P_N$ converge
to the distribution of reflected Brownian motion in $[0,1]$
with inert drift, as defined in \cite{W, BBCH}. The stationary
distribution for this continuous time process is the product of
the uniform measure in $[0,1]$ and the standard normal; see
\cite{BBCH}.

}
\end{example}

\begin{example}\label{rbm}
{\rm

This example is a semi-discrete approximation to reflected
Brownian motion in a bounded Euclidean subdomain of $\Re^n$,
with inert drift. In this example we proceed in the reversed
order, starting with $b_{ij}$'s and $a_{ij}$'s.

Consider an open bounded connected set $D \subset \Re^n$. Let
$K$ be a (large) integer and let $D_K = \ZZ^n/K \cap D$, i.e.,
$D_K$ is the subset of the square lattice with mesh $1/K$ that
is inside $D$. We assume that $D_K$ is connected, i.e., any
vertices in $D_K$ are connected by a path in $D_K$ consisting
of edges of length $1/K$. We take $\cs = D_K$ and $l\in \Re^n$.

We will consider nearest neighbor random walk, i.e., we will
take $a_{ij}(l) = 0$ for $|x_i - x_j| > 1/K$. In analogy to
(\ref{kb13}), we define
\begin{equation}\label{kb14}
a_{ij}(l) = \frac{K^2}2 (1 + (x_i-x_j)\cdot l ) \lor 0 \lor K^2
(x_i-x_j)\cdot l.
\end{equation}
Then $b_{ij}(l) = (K^2/N) (x_i-x_j)\cdot l$. Let us call a point
in $\cs = D_K$ an interior point if it has $2n$ neighbors in
$D_K$. We now define $v_j$'s using (\ref{eq:10}) with
$p_j=1/|D_K|$. For all interior points $x_j$, the vector $v_j$ is
0, by symmetry. For all boundary (that is, non-interior) points
$x_j$, the vector $v_j$ is not 0.

Fix $D \subset \Re^n$ and consider large $K$. Let $P_K$ denote
the distribution of $(X,L)$ constructed in this example. We
conjecture that as $K\to\infty$, $P_K$ converge to the
distribution of normally reflected Brownian motion in $D$ with
inert drift, as defined in \cite{W, BBCH}. If $D$ is $C^2$ then
it is known that the stationary distribution for this
continuous time process is the product of the uniform measure
in $D$ and the standard Gaussian distribution; see \cite{BBCH}.

}
\end{example}

\bigskip

The next two examples are discrete counterparts of processes
with continuous state space and smooth inert drift. The setting
is similar to that in Example \ref{rbm}. We consider an open
bounded connected set $D \subset \Re^n$. Let $K$ be a (large)
integer and let $D_K = \ZZ^n/K \cap D$, i.e., $D_K$ is the
subset of the square lattice with mesh $1/K$ that is inside
$D$. We assume that $D_K$ is connected, i.e., any vertices in
$D_K$ are connected by a path in $D_K$ consisting of edges of
length $1/K$. We take $\cs = D_K$ and $l\in \Re^n$.

\begin{example}
{\rm

This example is concerned with a situation when the stationary
distribution has the form $p_j g(l)$ where $p_j$'s are not
necessarily equal. We start with a $C^2$ ``potential'' $V: D
\to \Re$. We will write $V_j$ instead of $V(x_j)$. Let $p_j = c
\exp (-V_j)$. We need an auxiliary function
$$
d_{ij} = \frac{ 2(p_i - p_j)}{p_i(V_j - V_i) - p_j (V_i-V_j) }.
$$
Note that $d_{ij} = d_{ji}$ and
for a fixed $i$, we have $d_{ij_K}\to 1$ when $K\to \infty$ and
$|i-j_K| = 1/K$.

Let $a_{ij}(l) = 0$ for $|x_i - x_j|> 1/K$, and for $|x_i -
x_j|= 1/K$,
\begin{equation*}
\wt a_{ij}(l) = \frac{ K^2}2
(2 + d_{ij}  (V_i - V_j) +  (x_j -x_i) \cdot l).
\end{equation*}
We set
\begin{equation}\label{kb18}
a_{ij}(l) =
\begin{cases}
\wt a_{ij}(l) \lor 0& \text{if $\wt a_{ji}(l) >0$,} \\
(K^2/2 p_i) (p_i+p_j)(x_j - x_i) \cdot l &
\text{otherwise.}
\end{cases}
\end{equation}
If $\wt a_{ji}(l) >0$ and $\wt a_{ij}(l) >0$ then
\begin{align*}
b_{ij}(l) &=
p_i a_{ij}(l) - p_j a_{ji}(l) \\
&= \frac{K^2}2
(2(p_i - p_j) +
(p_i(V_i - V_j) - p_j (V_j-V_i))
d_{ij}
+ (p_i(x_j - x_i) - p_j (x_i-x_j))\cdot l) \\
&= \frac{K^2}2
(2(p_i - p_j) - 2(p_i - p_j) + (p_i+p_j)(x_j - x_i)\cdot l)\\
&= \frac{K^2}2 (p_i+p_j)(x_j - x_i)\cdot l.
\end{align*}
It follows from (\ref{kb18}) that the above formula holds also
if $\wt a_{ji}(l) \leq 0$ or $\wt a_{ij}(l) \leq 0$. Consider
an interior point $x_j$. For (\ref{kb5}) to be satisfied, we
have to take
\begin{equation*}
v_j = -\frac1{p_j} \sum_{|x_i-x_j| = 1/K} \frac{K^2}2
(p_i+p_j)(x_j - x_i).
\end{equation*}
For large $K$, series expansion shows that
\begin{align*}
v_j \approx -\nabla V.
\end{align*}

Fix $D \subset \Re^n$ and consider large $K$. Let $P_K$ denote
the distribution of $(X,L)$ constructed in this example. We
recall the following SDE from \cite{BBCH},
\begin{align*}
dY_t &= -\nabla V(Y_t)\, dt + S_t\, dt + dB_t\;, \\
dS_t &= -\nabla V(Y_t)\, dt\;,
\end{align*}
where $B$ is standard $n$-dimensional Brownian motion and $V$
is as above. Let $P_*$ denote the distribution of $(Y,S)$. We
conjecture that as $K\to\infty$, $P_K$ converge to $P_*$. Under
mild assumptions on $V$, it is known that the stationary
distribution for $(Y,S)$ is the product of the measure
$\exp(-V(x))dx$ and the standard Gaussian distribution; see
\cite{BBCH}.

}
\end{example}

\begin{example}
{\rm

We again consider the situation when all $p_j$'s are equal, i.e.,
$p_j = 1/N$. Consider a $C^2$ function $V: D \to \Re$. We let
$a_{ij}(l) = 0$ for $|x_i - x_j| > 1/K$. If $|x_i - x_j| = 1/K$,
we let
\begin{equation*}
\wt a_{ij}(l) = \frac{ K^2}2(1+ (V_j + V_i)(x_j-x_i) \cdot l).
\end{equation*}
We set
\begin{equation}\label{kb19}
a_{ij}(l) =
\begin{cases}
\wt a_{ij}(l) \lor 0& \text{if $\wt a_{ji}(l) >0$,} \\
K^2 (V_j + V_i)(x_j-x_i) \cdot l & \text{otherwise.}
\end{cases}
\end{equation}
Then $b_{ij}(l) = (1/N)K^2 (V_j + V_i)(x_j-x_i) \cdot l $ and
\begin{equation*}
v_j =  K^2 \sum_{|x_i - x_j| = 1/K} (V_j + V_i)(x_j-x_i).
\end{equation*}
For large $K$, we have $v_j \approx - 2 \nabla V$.

Fix $D \subset \Re^n$ and consider large $K$. Let $P_K$ denote
the distribution of $(X,L)$ constructed in this example.
Consider the following SDE,
\begin{align*}
dY_t &= V(Y_t) S_t\, dt + dB_t\;, \\
dS_t &= -2\nabla V(Y_t)\, dt\;,
\end{align*}
where $B$ is standard $n$-dimensional Brownian motion and $V$
is as above. Let $P_*$ denote the distribution of $(Y,S)$. We
conjecture that as $K\to\infty$, $P_K$ converge to $P_*$, and
that the stationary distribution for $(Y,S)$ is the product of
the uniform measure on $D$ and the standard Gaussian
distribution.

}
\end{example}

\bigskip

The next example and conjecture are devoted to examples where
the inert drift is related to the curvature of the state space,
in a suitable sense.

\begin{example}
{\rm

In this example, we will identify $\Re^2$ and $\CC$. The
imaginary unit will be denoted by $i$, as usual. Let $\cs$
consist of $N$ points on a circle with radius $r>0$, $x_j=r
\exp(j2\pi i/N), j=1,\dots, N$. We assume that the $p_j$'s are
all equal to each other.

For any pair of adjacent points $x_j$ and $x_k$, we let
$$
\wt a_{jk}(l)  = \frac{N^2}2 (1 +(x_k-x_j)\cdot l),
$$
and
\begin{equation*}
a_{jk}(l) =
\begin{cases}
\wt a_{jk}(l) \lor 0& \text{if $\wt a_{kj}(l) >0$,} \\
N^2(x_{k}-x_j)\cdot l & \text{otherwise,}
\end{cases}
\end{equation*}
with the other $a_{kj}(l)=0$. Then $b_{j(j+1)}=N(x_{j+1}-x_j)\cdot
l$, and by \eqref{kb5} we have
\[ v_{j}=N^2(x_{j-1}-x_{j})+N^2(x_{j+1}-x_j)
= 2 N^2  ( \cos(2\pi/N)-1) x_j.
\]
Note that $v_j \to -4 \pi^2 x_j$ when $N\to \infty$.

Let $P_N$ be the distribution of $(X,L)$ constructed above.

Let $\cal C$ be the circle with radius $r>0$ and center 0, and
let $T_y$ be the projection of $\Re^2$ onto the tangent line to
$\cal C$ at $y \in \cal C$. Consider the following SDE,
\begin{align*}
dY_t &=  T_{Y_t}(S_t)\, dt + dB_t\;, \\
dS_t &= - 4\pi^2  Y_t \, dt\;,
\end{align*}
where $Y$ takes values in $\cal C$ and $B$ is Brownian motion
on this circle. Let $P_*$ be the distribution of $(Y,S)$. We
conjecture that as $N\to\infty$, $P_N$ converge to $P_*$, and
that the stationary distribution for $(Y,S)$ is the product of
the uniform measure on the circle and the standard Gaussian
distribution.

}
\end{example}

\begin{conjecture}
{\rm

We propose a generalization of the conjecture stated in the
previous example. We could start with an explicit discrete
approximation, just like in other examples discussed so far.
The notation would be complicated and the whole procedure would
not be illuminating, so we skip the approximation and discuss
only the continuous model.

Let ${\cal S} \subset \Re^n$ be a smooth $(n-1)$-dimensional
surface, let $T_y$ be the projection of $\Re^n$ onto the
tangent space to $\cal S$ at $y \in \cal S$, let ${\bf n}(y)$
be the inward normal to $\cal S$ at $y\in \cal S$, and let
$\rho$ be the mean curvature at $y \in \cal S$. Consider the
following SDE,
\begin{align*}
dY_t &=  T_{Y_t}(S_t)\, dt + dB_t\;, \\
dS_t &=  c_0 \rho^{-1} {\bf n}(Y_t) \, dt\;,
\end{align*}
where $Y$ takes values in $\cal S$ and $B$ is Brownian motion
on this surface. We conjecture that for some $c_0$ depending
only on the dimension $n$, the stationary distribution for
$(Y,S)$ exists, is unique and is the product of the uniform
measure on $\cal S$ and the standard Gaussian distribution.

}
\end{conjecture}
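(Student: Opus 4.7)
The plan is to verify directly that the product measure $\mu(dy,ds) = d\sigma(y) \otimes \gamma(s)\,ds$---with $\sigma$ the normalized surface measure on ${\cal S}$ and $\gamma$ the standard Gaussian density on $\Re^n$---is annihilated by the formal $L^2$-adjoint of the generator of $(Y,S)$. Writing that generator as
$$\mathcal{L} f(y,s) = \tfrac{1}{2} \Delta_{\cal S} f + T_y(s) \cdot \nabla_{\cal S} f + c_0\,\rho(y)^{-1}\,{\bf n}(y) \cdot \nabla_s f,$$
the goal is to show $\int \mathcal{L} f \, d\mu = 0$ for every $f \in C^\infty_c({\cal S} \times \Re^n)$. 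This is the continuous analogue of the condition $A^*(p_j g)\equiv 0$ in Theorem~\ref{thm:main}, and the structure---integrate by parts in each variable and extract a pointwise identity---is parallel to the discrete case there.

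The Laplace--Beltrami term drops out because $\int_{\cal S} \Delta_{\cal S} f\,d\sigma = 0$ on a closed manifold. Integration by parts in $s$, using $\nabla_s \gamma = -s\gamma$, turns the $S$-drift term into $\int c_0 \rho(y)^{-1}({\bf n}(y)\cdot s)\, f\, \gamma\,d\sigma\,ds$. The central computation is the intrinsic divergence on ${\cal S}$ of the tangential vector field $y \mapsto T_y(s) = s - \langle s,{\bf n}(y)\rangle {\bf n}(y)$ with $s$ held fixed. In a local orthonormal frame $\{e_i\}$, using that $D_{e_i}{\bf n} = W(e_i)$ is tangential (the Weingarten map), the only tangentially surviving component of $D_{e_i} T_y(s)$ is $-\langle s,{\bf n}\rangle W(e_i)$, yielding
$$\mathrm{div}_{\cal S}\bigl(T_\cdot(s)\bigr)(y) = -\langle s,{\bf n}(y)\rangle\,\mathrm{tr}\, W(y).$$
Integrating the $Y$-drift by parts against $\mu$ therefore contributes $\int \langle s,{\bf n}(y)\rangle\,\mathrm{tr}\,W(y)\, f\,\gamma\,d\sigma\,ds$.

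Combining the three contributions, the adjoint acts on $\mathbf{1}_{\cal S}\otimes\gamma$ as pointwise multiplication by $\langle s,{\bf n}(y)\rangle\bigl[c_0 \rho(y)^{-1} + \mathrm{tr}\,W(y)\bigr]$, which vanishes identically if and only if
$$c_0 = -\rho(y)\,\mathrm{tr}\, W(y), \qquad y \in {\cal S}.$$
Reading ``mean curvature'' as the reciprocal $\rho(y) = (n-1)/\mathrm{tr}\,W(y)$ (i.e.\ the mean radius of principal curvature) makes the right-hand side equal to the universal constant $-(n-1)$, identifying $c_0 = -(n-1)$ as dimensional. The sign reflects a choice of orientation of ${\bf n}$; I would calibrate the final value against the $(n-1)$-sphere, where every quantity is explicit, and cross-check with the circle example in Section~\ref{sec:ex}.

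Existence and uniqueness of the stationary measure require three further ingredients: compactness of ${\cal S}$ (automatic tightness in $Y$); a Foster--Lyapunov estimate using $|s|^2$ applied to $\mathcal{L}$ to control the $S$-marginal; and hypoellipticity of $\mathcal{L}$, established by a H\"ormander bracket computation---the Brownian vector fields on ${\cal S}$ bracketed with the drift inject $s$-derivatives through the tangential derivative of $\rho^{-1}{\bf n}$, and nondegeneracy of $W$ then spans all of $\Re^n$ in $s$. The main obstacle I expect is this last step: wherever $\mathrm{tr}\,W$ vanishes or $\rho^{-1}$ blows up the noise fails to propagate into $S$, so unique ergodicity will likely require either a curvature nondegeneracy hypothesis on ${\cal S}$ or a delicate Harris-type coupling argument analogous to the one invoked after the model definition.
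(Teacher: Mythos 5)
The first thing to note is that the paper offers no proof of this statement: it is labelled a conjecture, and Section \ref{sec:ex} opens by announcing that it contains no proofs. There is therefore no proof of record to compare yours against; what you have written is a plausibility argument for an open problem. As such, your formal adjoint computation is the right heuristic and is exactly the continuous analogue of condition \eqref{kb2} in Theorem \ref{thm:main}: kill the Laplace--Beltrami term by closedness of $\cal S$, use $\nabla_s\gamma=-s\gamma$, and compute $\mathrm{div}_{\cal S}(T_\cdot(s))=-\langle s,{\bf n}\rangle\,\mathrm{tr}\,W$. The resulting pointwise balance $c_0\rho(y)^{-1}+\mathrm{tr}\,W(y)=0$ is genuinely informative: it shows the conjecture can only hold with a $y$-independent $c_0$ if $\rho^{-1}$ is proportional to $\mathrm{tr}\,W$, which pins down how ``mean curvature'' must be read; your plan to calibrate against the sphere and the circle example is sensible, though note that the paper's limiting SDE in the circle example carries nonstandard factors of $(2\pi r)^2$ inherited from the lattice spacing, so that cross-check is more delicate than it looks.

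Two steps of your proposal would fail as written, and they are the actual content of the conjecture. First, the Foster--Lyapunov claim: applying your generator to $|s|^2$ gives $2c_0\rho^{-1}\langle{\bf n}(y),s\rangle$, which has no definite sign and does not go to $-\infty$ as $|s|\to\infty$; the equation for $S$ has no damping term, so $|s|^2$ is not a Lyapunov function, and positive recurrence is precisely the hard analytic point for inert-drift processes (it is where \cite{BBCH} spends its effort). Second, your computation establishes only formal infinitesimal invariance of the product measure; upgrading that to invariance of the semigroup is exactly what the long approximation argument in the proof of Theorem \ref{thm:main} accomplishes in the discrete case, and that argument leans heavily on the finiteness of $\cal S$ (uniform bounds on jump rates, a stochastically bounded number of jumps) in ways that do not transfer to a degenerate diffusion on a manifold. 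Together with the hypoellipticity issue you yourself flag, this means the proposal is a good derivation of what $c_0$ must be, not a proof that the stationary distribution exists, is unique, and has the product form --- which is consistent with the authors presenting the statement as a conjecture.
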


\bigskip

We end with examples of processes that are discrete
approximations of continuous-space processes with jumps. It is
not hard to construct examples of discrete-space processes that
converge in distribution to continuous-space processes with
jumps. Stable processes are a popular family of processes with
jumps. These and similar examples of processes with jumps allow
for jumps of arbitrary size, and this does not mesh well with
our model because we assume a finite state space for $X$. Jump
processes confined to a bounded domain have been defined (see,
e.g., \cite{BBC}) but their structure is not very simple. For
these technical reasons, we will present approximations to
processes similar to the stable process wrapped around a
circle.

In both examples, we will identify $\Re^2$ and $\CC$. Let $\cs$
consist of $N$ points on the unit circle $D$, $x_j= \exp(j2\pi
i/N), j=1,\dots, N$. We assume that the $p_j$'s are all equal
to each other, hence, $p_j = 1/N$. In these examples, $L$ takes
values in $\Re$, not $\Re^2$.

\begin{example}
{\rm

Consider a $C^3$-function $V: D \to \Re$. We write $V_j = V(x_j)$.
We define
\[A(j,k)=\begin{cases}
1 & \text{if $x_j$ and $x_ k$ are adjacent on the unit circle,} \\
0 & \text{otherwise}.
\end{cases}\]
For any pair of points $x_j$ and $x_k$, not
necessarily adjacent, we let
$$
\wt a_{jk}(l)  = \frac {N^2}2 (V_k - V_j)  A(j,k)
 l + \frac 1N \sum _{n\in \ZZ} |(k-j)+nN|^{-1-\alpha} ,
$$
where $\alpha \in (0,2)$. We define
\begin{equation*}
a_{jk}(l) =
\begin{cases}
\wt a_{jk}(l) \lor 0& \text{if $\wt a_{kj}(l) >0$,} \\
N^2 (V_k - V_j)  A(j,k) l & \text{otherwise.}
\end{cases}
\end{equation*}
Then
 $$b_{jk}(l)= N (V_k - V_j)  A(j,k) l
 $$
and by \eqref{kb5} we have
$$
 v_k= -N^2 \sum_ {j: A(k,j) = 1} V_k - V_j.
$$
Note that $v_k \to  \Delta V(x) =  V''(x) $ when $N\to \infty$ and
$x_k \to x$.

Let $P_N$ be the distribution of $(X,L)$ constructed above. Let
$W(x) = V(e^{ix})$ and let $(Z,S)$ be a Markov process with the
state space $\Re\times \Re$ and the following transition
probabilities. The component $Z$ is a jump process with the drift
$\nabla W(Z) S = W'(Z) S$. The jump density for the process $Z$ is
$ \sum_{n\in \ZZ} |(x-y) + n 2\pi|^{-1-\alpha}$. We let $S_t =
\int_0^t \Delta W(Z_s) ds$. Let $Y_t = \exp(iZ_t)$ and $P_*$ be
the distribution of $(Y,S)$. We conjecture that $P_N \to P_*$ as
$N\to \infty$ and the process $(Y,S)$ has the stationary
distribution which is the product of the uniform measure on $D$
and the standard normal distribution. The process $(Y,S)$ is a
``stable process with index $\alpha$, with inert drift, wrapped on
the unit circle.''

}
\end{example}

\begin{example}
{\rm

Consider a continuous function $V: D \to \Re$ with $\int_D V(x)
dx =0$. Recall the notation $V_j = V(x_j)$. For any pair of
points $x_j$ and $x_k$, not necessarily adjacent, we let
$$
\wt a_{jk}(l)  = \frac 1N \left( \frac 12 (V_k - V_j) l + \sum
_{n\in \ZZ} |(k-j)+nN|^{-1-\alpha}\right) ,
$$
where $\alpha \in (0,2)$. We define
\begin{equation*}
a_{jk}(l) =
\begin{cases}
\wt a_{jk}(l) \lor 0& \text{if $\wt a_{kj}(l) >0$,} \\
\frac 1N (V_k - V_j) l &
\text{otherwise.}
\end{cases}
\end{equation*}
Then $b_{jk}(l)= (1/N^2) (V_k - V_j)  l $ and by \eqref{kb5} we
have
$$
 v_k= \frac 1N \sum_{1\leq j \leq N, j\ne k} V_k - V_j.
$$
Note that if $\arg x_k \to y $ when $N\to \infty $ then $v_k \to
V(e^{iy}) - \int_D V(x)dx = V(e^{iy})$.

Let $P_N$ be the distribution of $(X,L)$ constructed above. Let
$W(x) = V(e^{ix})$ and let $(Z,S)$ be a Markov process with the
state space $\Re\times \Re$ and the following transition
probabilities. The component $Z$ is a jump process with the
jump density $ f(x) = (W(x) -W(y))s - \int \sum_{n\in \ZZ}
((x-y) + n 2\pi)^{-1-\alpha}$ at time $t$, given $\{Z_t = y,
S_t = s\}$. We let $S_t = \int_0^t W(Z_s) ds$. Let $Y_t =
\exp(i Z_t)$ and $P_*$ be the distribution of $(Y,S)$. We
conjecture that $P_N \to P_*$ as $N\to \infty$ and the process
$(Y,S)$ has the stationary distribution which is the product of
the uniform measure on $D$ and the standard normal
distribution.

}
\end{example}

\bigskip

\section{Acknowledgments}

We are grateful to Zhenqing Chen and Tadeusz Kulczycki for very
helpful suggestions.

 \vskip1truein

\bibliographystyle{amsplain}
\bibliography{rwdrift}

\vskip 0.6truein

\noindent{\bf Krzysztof Burdzy:}

Department of Mathematics, University of Washington,  Seattle,
WA 98195, USA.

 Email: \texttt{burdzy@math.washington.edu}

\bigskip

\noindent{\bf David White:}

 Department of Mathematics, Belmont University, Nashville TN 37212

  Email: \texttt{white@u.washington.edu}

\end{document}